\patchcmd{\thebibliography}{\leftmargin\labelwidth}{\leftmargin\labelwidth\addtolength\itemsep{-0.1\baselineskip}}{}{}
\author{Boris Bukh\thanks{Department of Mathematical Sciences, Carnegie Mellon University, Pittsburgh, PA 15213, USA\@. Supported in part by U.S.\ taxpayers through NSF grant DMS-2154063.}
\and 
Alexey Vasileuski\thanks{Department of Mathematical Sciences, Carnegie Mellon University, Pittsburgh, PA 15213, USA\@. Supported in part by U.S.\ taxpayers through NSF grants DMS-2154063 and CAREER DMS-2042428.}\\}
\title{New bounds for the same-type lemma}
\newtheorem{theorem}{Theorem}
\newtheorem{lemma}[theorem]{Lemma}
\newtheorem{proposition}[theorem]{Proposition}
\newcommand*{\eqdef}{\stackrel{\mbox{\normalfont\tiny def}}{=}}  
\newcommand*{\veps}{\varepsilon}                                 
\DeclarePairedDelimiter\abs{\lvert}{\rvert}                      
\newcommand*{\R}{\mathbb{R}}
\DeclareMathOperator{\conv}{conv}
\DeclareMathOperator{\poly}{poly}
\DeclareMathOperator{\sign}{sign}
\DeclareMathOperator{\orient}{orient}
\begin{document}
\maketitle
\begin{abstract}
  Given finite sets $X_1,\dotsc,X_m$ in $\R^d$ (with $d$ fixed), we prove that there are respective subsets $Y_1,\dotsc,Y_m$ with $\abs{Y_i}\geq \frac{1}{\poly(m)}\abs{X_i}$
  such that, for $y_1\in Y_1,\dotsc,y_m\in Y_m$, the orientations of the\linebreak $(d+1)$-tuples from $y_1,\dotsc,y_m$ do not depend on the actual choices of points $y_1,\dotsc,y_m$.
  This generalizes previously known case when all the sets $X_i$ are equal. Furthermore, we give a construction showing that polynomial dependence on $m$ is unavoidable,
  as well as an algorithm that approximates the best-possible constants in this result.
\end{abstract}

\section{Introduction}

We say that the sets $Y_1,\dotsc,Y_{d+1}$  in $\R^d$ have the \emph{same-type property} if, for every choice of points $y_1\in Y_1,\dotsc,y_{d+1}\in Y_{d+1}$,
the orientation of points $y_1,\dotsc,y_{d+1}$ is the same. More generally, we say that the sets $Y_1,\dotsc,Y_m$ have the same-type property if every $d+1$
of them do. A natural question is the following: given disjoint finite sets $X_1, \dots, X_m$ in $\R^d$ such that their union is in general position, are there large subsets $Y_i \subseteq X_i$ such that the sets $Y_1, \dots, Y_r$ have the same-type property?
B{\'a}r{\'a}ny and Valtr  \cite{barany1998positive} proved that each $Y_i$ may be taken to have a positive fraction of points from the corresponding $X$-set.
How large could this fraction be?
Formally, for disjoint sets $X_1, \dots , X_m$ in $\R^d$, whose union is in general position, denote by $c(X_1,\dots, X_m)$ the largest constant $c$ for which there exist $Y_1, \dots, Y_n$ having the same-type property and satisfying $Y_i \subseteq X_i$, $|Y_i| \ge c |X_i|$.
For fixed number $m$ and dimension $d$, denote by $c(m,d)$  the infimum of $c$ for all such  configurations. 

The same-type lemma has been used to prove a number of positive fraction results in discrete geometry,
including Radon theorem, Tverberg theorem and Erd\H{o}s--Szekeres theorem \cite{barany1998positive}. Notably, a quantitative version of the latter
due to P\'or and Valtr \cite{por_valtr} was a crucial ingredient in Suk's proof \cite{suk_convex}
of the bound $2^{n+o(n)}$ for the number of points on plane guaranteeing the existence of $n$ points in convex position.
Additional results that directly use the same-type lemma are \cite{kt_application,fh_application}; also, many arguments
that are similar to the same-type lemma appear in the literature, e.g.,
\cite{pach_multicolored,bukh_hubard,pach_solymosi_segments,fox2016polynomial,mirzaei_suk}.

In their original paper, B{\'a}r{\'a}ny and Valtr showed that 
$c(m,d)$ is at least $(d+1)^{(-2^d-1)\binom{m-1}{d}}$. Fox,~Pach and Suk  \cite{fox2016polynomial} improved this  to $c(m,d) \ge 2^{-O(d^3 m \log m)}$.

Our first result shows that $c(m,d)$ is polynomial in $m$, for fixed $d$. 
\begin{theorem}
    \label{theorem:maintheorem}
    For $d \ge 2$ and $m \ge d$ the constant $c(m,d)$  satisfies
     \[ 
     d^{-50d^3}m^{-d^2} \le c(m,d) \le  d^dm^{-d}.
     \] 
\end{theorem}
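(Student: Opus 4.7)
I address the two halves of the theorem separately.

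\textbf{Upper bound.} I would exhibit an explicit family $X_1,\dots,X_m$. The factorization $(d/m)^d$ in the bound suggests a construction where the same-type property enforces independent losses along $d$ ``directions''. A natural candidate places each $X_i$ on a structured lifted grid (or a $d$-scale moment-curve variant), arranged so that each of $d$ coordinate projections exhibits an interleaving of the $m$ sets; the same-type property must then enforce a separation in each projection, each separation costing a factor of $\Theta(d/m)$, for a compounded loss of $(d/m)^d$. Verifying independence of the losses across the $d$ scales, without one scale's separation ``helping'' another, is the main content of this half.

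\textbf{Lower bound.} The novelty is polynomial (rather than exponential) dependence on $m$. Previous approaches iterate a geometric selection step over each of the $\binom{m}{d+1}$ orientation constraints, losing a constant factor per iteration and hence exponentially overall. My plan is to instead group the constraints by their supporting $d$-subset $T\subseteq[m]$: all constraints involving $T$ concern sidedness with respect to the hyperplanes spanned by picking one point from each $X_t$, $t\in T$. Within one such batch, I would use a cutting or simplicial-partition argument on the arrangement supported by $X_T$ to find a sub-cloud of each $X_i$ (for $i\notin T$) contained in a single cell; this should give a $\poly(d)/m^d$ fraction of each set in one geometric step, fixing every orientation in that batch at once.

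The key to polynomial-in-$m$ dependence is that refinements for different batches can \emph{reuse} common geometric structure, so that iterating over all batches contributes only $O(d)$ multiplicatively independent losses, yielding the $m^{-d^2}$ factor. The specific constant $d^{-50d^3}$ would emerge from careful accounting of this reuse and of the complexity of the cuttings needed per batch (which inflates with $d$ but not with $m$).

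\textbf{Main obstacle.} The technical heart is the per-batch cutting: in a single step, find a region that is simultaneously a large fraction of every $X_i$ (for $i \notin T$) and lies in one cell of an arrangement of $\prod_{t\in T}\abs{X_t}$ hyperplanes. Naive cell enumeration fails because the arrangement has exponentially many cells, but the key observation is that this arrangement is generated by only $d$ ``factors'' (one per element of $T$), so its \emph{effective} combinatorial complexity is polynomial in the $|X_t|$ rather than exponential in their product. Converting this observation into a simplex of the required size — and then ensuring that the reuse across the $\binom{m}{d}$ choices of $T$ really does collapse to $O(d)$ multiplicatively independent losses rather than $\binom{m}{d}$ of them — is where the proof will concentrate its effort.
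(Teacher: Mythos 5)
Your proposal diverges from the paper's proof on both halves, and in each case the part you flag as ``the main content'' or ``the technical heart'' is precisely where the argument is missing, not merely unpolished.

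\textbf{Upper bound.} The paper does use a grid-type construction, so the flavor is similar, but the mechanism is different from what you sketch. The paper takes a single \emph{grid set} $X$ of size $\binom{n}{d}$ (the $d$-wise intersections of $n$ hyperplanes) and lets each $X_i$ be a tiny generic perturbation of the \emph{same} set $X$. Given subsets $Y_i$ with the same-type property, it sets $Z_i = X \cap \operatorname{int}\conv Y_i$; by \Cref{lemma:SameTypeFailing}, each of the $n$ generating hyperplanes can meet at most $d$ of the $m$ sets $Z_1,\dots,Z_m$, so by pigeonhole some $Z_i$ meets at most $dn/m$ of the hyperplanes and hence has at most $\binom{dn/m}{d}\approx (d/m)^d\binom{n}{d}$ points. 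The $(d/m)^d$ here does \emph{not} come from $d$ multiplicatively independent one-dimensional losses along $d$ directions; it comes from a single pigeonhole combined with the fact that a grid point is a $d$-wise intersection. Your sketch of a moment-curve or multi-scale construction with ``independent losses across projections'' is a different construction, and you correctly identify that proving independence of the $d$ losses is the hard part --- but you offer no mechanism for it. The paper's route avoids the issue entirely.

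\textbf{Lower bound.} Here your approach is genuinely different and the gap is more serious. The paper does not iterate over batches $T\subseteq[m]$ of size $d$ nor rely on any ``reuse of geometric structure'' across batches. Instead it applies Guth--Katz \emph{polynomial partitioning} (\Cref{lemma:PolynomialPartitioning}) once to each $X_i$, producing a bounded-degree algebraic surface whose complement has cells each containing a small fraction of $X_i$. It then builds a $(d+1)$-uniform $m$-partite hypergraph on the large cells, with an edge whenever $d+1$ cells can be pierced by a common hyperplane, proves a sparsity bound on this hypergraph (\Cref{lemma:edgenumberbound}, via a clever ``diagonal'' variety in dimension $d^2+d-1$), and finally invokes the \emph{Lov\'asz Local Lemma} to pick one cell per part that hits no edge. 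The LLL is what lets you satisfy all $\binom{m}{d+1}$ orientation constraints simultaneously with a single random choice, rather than paying a multiplicative price per batch. Your proposal replaces this with the assertion that cuttings for the $\binom{m}{d}$ batches ``collapse to $O(d)$ multiplicatively independent losses,'' but you give no reason why cuttings tuned to different $T$'s should nest or compose; generically they will not, and an honest iteration over batches reproduces the exponential loss of the earlier B\'ar\'any--Valtr and Fox--Pach--Suk arguments. Your intuition that the relevant arrangement has polynomial ``effective complexity'' because it is generated by $d$ factors is in the right spirit --- it is cousin to the paper's \Cref{lemma:edgenumberbound} --- but without polynomial partitioning to produce a bounded-size cell structure and without LLL to handle all constraints at once, that observation does not assemble into a proof.
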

Polynomial bounds were previously known only for the special case when the sets $X_1,\dotsc,X_m$ are all equal
(see Lemma 3.2 in \cite{mirzaei_suk}, with references to \cite{fox2016polynomial}). Our upper bound
of $d^d m^{-d}$ applies even to this special case; it is a first upper bound both in the special and general
cases.

We also show that the constants $c(m,d)$ can be computed with arbitrary precision, at least in principle.
\begin{theorem}
\label{theorem:approximations_main}
  There exists an algorithm that computes, for any input $m,d\in \mathbb{N}$ and $\veps>0$, a constant
  $c'(m,d)$ satisfying $\abs{c'(m,d)-c(m,d)}<\veps$.
\end{theorem}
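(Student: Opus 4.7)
My plan is to show $c(m,d)=\lim_{N\to\infty} c_N(m,d)$ with an effective rate, where
\[
  c_N(m,d)\eqdef\min\bigl\{c(X_1,\dotsc,X_m):\abs{X_i}\le N,\ \textstyle\bigcup X_i\text{ in general position}\bigr\}
\]
is explicitly computable for each $N$. Computability follows from the fact that $c(X_1,\dotsc,X_m)$ depends only on the order type of the labeled union $X_1\sqcup\dotsb\sqcup X_m$: there are finitely many realizable chirotopes on at most $mN$ points in $\R^d$, enumerable as sign-vectors and checked for realizability via the Tarski--Seidenberg decision procedure for the existential theory of the reals; and for each realizable chirotope the quantity $c=\max\min_i\abs{Y_i}/\abs{X_i}$ over same-type tuples $(Y_i)$ is a finite combinatorial optimization. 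Thus $c_N(m,d)$ is a computable rational, with $c_N(m,d)\ge c(m,d)$ trivially.

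The core task is to bound $c_N(m,d)-c(m,d)<\varepsilon$ for an effective $N=N(m,d,\varepsilon)$. I would first extend $c$ to a functional $\bar c$ on $m$-tuples of probability measures supported on a fixed compact $K\subset\R^d$ (by affine invariance of the same-type property, every configuration normalizes into $K$): set $\bar c(\mu_1,\dotsc,\mu_m)$ to be the supremum of $c$ for which there exist measurable $S_i\subseteq K$ with $\mu_i(S_i)\ge c$ and, for each $I$, the orientation a.e.\ constant on $S_{i_1}\times\dotsb\times S_{i_{d+1}}$. Empirical measures recover the discrete $c$, so $c(m,d)=\inf_\mu\bar c(\mu)$. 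By weak-$*$ compactness of the measure-tuple space together with lower semicontinuity of $\bar c$---proved by noting that optimal witnesses $S_i$ may be taken as intersections of halfspaces through $d$-tuples of points from the other $\mu_j$'s, a family of bounded combinatorial complexity---this infimum is attained at some $\mu^*$. I would then approximate $\mu^*$ by a uniform empirical measure $\mu^N$ on $N=\poly(d,m,1/\varepsilon)$ points, invoking the classical $\varepsilon$-approximation theorem for halfspace arrangements in $\R^d$ (VC dimension $d+1$): halfspace masses of $\mu^N$ and $\mu^*$ then agree up to $\varepsilon$, so the density of $\tau$-bad $(d+1)$-tuples is preserved up to $\varepsilon$ for every orientation pattern $\tau\in\{+,-\}^{\binom{m}{d+1}}$, yielding $\abs{\bar c(\mu^N)-\bar c(\mu^*)}<\varepsilon$ after union-bounding over the finitely many $\tau$.

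The hardest step is the semicontinuity and the passage from halfspace $\varepsilon$-approximation to an $\varepsilon$-approximation of $\bar c$: the optimal sets $S_i$ are coupled across indices by the orientation constraints on $(d+1)$-tuples, so single-parameter VC theory does not apply directly. The fix is to fix $\tau$ first, observe that the $\tau$-bad tuples form a $(d+1)$-partite hypergraph cut out by a single determinant sign, and apply a multipartite/product version of the $\varepsilon$-approximation lemma (or a Chernoff union bound across the $2^{\binom{m}{d+1}}$ patterns and the polynomially many halfspace arrangements indexed by $d$-tuples from the $\mu_j^N$) to get uniform preservation of bad-tuple densities. Once this is in hand, the algorithm is: on input $(m,d,\varepsilon)$, compute the effective $N(m,d,\varepsilon)$ from the VC bounds above, enumerate all realizable chirotopes with parts of size $\le N$, compute $c$ for each by exhaustive search over same-type $(Y_i)$, and output the minimum, which then satisfies $\abs{c'(m,d)-c(m,d)}<\varepsilon$.
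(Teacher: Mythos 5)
Your high-level skeleton matches the paper's: reduce to bounded-size configurations, then use Tarski's decidability of the existential theory of the reals to enumerate realizable chirotopes and compute $c$ exactly on each. The computability half and the trivial inequality $c_N(m,d)\ge c(m,d)$ are fine. The gap is exactly where you flag it: passing from an $\veps$-approximation of measures to $\abs{\bar c(\mu^N)-\bar c(\mu^*)}<\veps$.

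Your proposed fix is not a proof. You suggest preserving the density of $\tau$-bad $(d+1)$-tuples for each orientation pattern $\tau$, via a ``multipartite/product version of the $\veps$-approximation lemma'' or a union bound over $2^{\binom{m}{d+1}}$ patterns. But $\bar c$ is not a density of bad tuples; it is a constrained optimization---maximize $\min_i \mu_i(S_i)$ subject to \emph{zero} bad tuples on $S_1\times\dotsb\times S_{d+1}$ for every $(d+1)$-subset. Controlling bad-tuple densities does not control the optimum of this program: small perturbations in bad-tuple densities can eliminate or create feasible witness tuples $(S_i)$ altogether. Also, the measure-theoretic detour (extending to $\bar c$, proving lower semicontinuity, invoking weak-$*$ compactness to attain the infimum) is both unnecessary and shaky: it suffices to pick a configuration $X_1,\dots,X_m$ with $c(X_1,\dots,X_m)<c(m,d)+\veps/2$ by definition of infimum, and semicontinuity of $\bar c$ under weak-$*$ convergence is not obvious since the orientation constraint is discontinuous in the measures.

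The paper avoids the coupling problem by choosing a different VC class and a different reduction. Instead of halfspaces (VC dim $d+1$), it takes $\veps$-approximants $A_i\subseteq X_i$ relative to the family of open polytopes with at most $m$ facets (VC dim $O(dm\log m)$). Then, given any same-type $Y_i\subseteq A_i$, \Cref{lemma:SameTypeFailing} produces for each $i$ a hyperplane $H_i$ separating $Y_i$ from the rest; each $Y_i$ then lies in one cell $P_i$ of the arrangement $\{H_1,\dots,H_m\}$, an open polytope with $\le m$ facets. The crucial observation---which is what replaces your bad-tuple bookkeeping---is that the $P_i$'s themselves have the same-type property, hence so do $P_1\cap X_1,\dots,P_m\cap X_m$, so some $\abs{P_i\cap X_i}/\abs{X_i}\le c(m,d)+\veps/2$. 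The single-set $\veps$-approximation for polytopes then gives $\abs{Y_i}/\abs{A_i}\le \abs{P_i\cap A_i}/\abs{A_i}\le \abs{P_i\cap X_i}/\abs{X_i}+\veps/2$. No coupling, no multipartite VC theory, no union bound over orientation patterns. You correctly noticed that ``optimal witnesses may be taken as intersections of halfspaces,'' but you should have pushed this observation into the choice of VC class rather than using it only for the semicontinuity claim.
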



\paragraph{Thanks.} We are grateful to Amzi Jeffs for helpful discussions.

\section{Preliminaries}
\paragraph{Sets with the same-type property.}
For simplicity, we say that a family of sets $X_1, \dotsc, X_m \subseteq \R^d$ is in \emph{general position} if the sets $X_1, \dots, X_m$ are disjoint and their union is a set of points in general position.
We start with a convenient sufficient condition for sets to have the same-type property.

\begin{lemma}
    \label{lemma:SameTypeFromConnected}
    Suppose that $Y_1, \dots,  Y_{d+1} \subseteq \R^d$ are connected sets and no hyperplane intersects all of them.  Then the sets $Y_1, \dots , Y_{d+1}$ have the same-type property.
\end{lemma}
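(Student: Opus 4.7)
The plan is a short continuity-and-connectedness argument. I would encode the orientation of a tuple $(y_1,\dotsc,y_{d+1})\in(\R^d)^{d+1}$ as the sign of the polynomial
\[
    D(y_1,\dotsc,y_{d+1}) \eqdef \det\begin{pmatrix} 1 & y_1^T \\ \vdots & \vdots \\ 1 & y_{d+1}^T \end{pmatrix},
\]
which is continuous on $(\R^d)^{d+1}$ and vanishes precisely when the points are affinely dependent, i.e.\ lie in a common hyperplane.

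First I would verify that $D$ is nowhere zero on the product $Y_1\times\dotsb\times Y_{d+1}$: if $D$ vanished at some $(y_1,\dotsc,y_{d+1})$ with $y_i\in Y_i$, then $y_1,\dotsc,y_{d+1}$ would lie in a common hyperplane $H$, and this $H$ would meet each $Y_i$, contradicting the hypothesis. Next, since each $Y_i$ is connected, the finite product $Y_1\times\dotsb\times Y_{d+1}$ is connected in the product topology. A continuous real-valued function that never vanishes on a connected space has constant sign, because $\sign\circ D$ lands in the discrete two-point set $\{-1,+1\}$. The constancy of $\sign D$ is precisely the same-type property for $Y_1,\dotsc,Y_{d+1}$.

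I do not anticipate any real obstacle. The only point meriting mention in the write-up is that the hyperplane hypothesis is exactly the condition needed to rule out $D=0$ on the product; everything else is standard point-set topology. If one preferred a more elementary presentation, the same conclusion can be reached by observing that any two tuples in $Y_1\times\dotsb\times Y_{d+1}$ can be joined coordinatewise by a continuous path (since each $Y_i$ is path-connectable through the ambient space, and one can in fact work within the connected set $Y_i$ itself using a path-component refinement), and then applying the intermediate value theorem to $D$ along this path; but the product-topology argument is cleaner and avoids any path-connectedness hypothesis.
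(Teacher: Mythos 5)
Your proposal is correct and essentially identical to the paper's argument: both rely on the connectedness of the product $Y_1\times\dotsb\times Y_{d+1}$ together with the continuity of the orientation determinant, using the hyperplane hypothesis to rule out the zero set. The only cosmetic difference is that you argue directly (nonvanishing continuous function on a connected set has constant sign) whereas the paper argues by contradiction (both sign classes nonempty and relatively open would force a zero).
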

\begin{proof}
    Note that, since the set $Y\eqdef Y_1\times \dotsb\times Y_{d+1}$ is a product of connected sets, it is itself connected.
    Suppose that the sets $Y_1,\dotsc,Y_{d+1}$ lack the same-type property. Then the
    sets
    \begin{align*}
      Y_+&\eqdef \{(y_1,\dotsc,y_{d+1})\in Y : \orient(y_1,\dotsc,y_{d+1})>0\},\\
      Y_-&\eqdef \{(y_1,\dotsc,y_{d+1})\in Y : \orient(y_1,\dotsc,y_{d+1})<0\}
    \end{align*}
    are both non-empty. Since both $Y_+$ and $Y_-$ are relatively open in $Y$, and $Y$ is connected, this implies
    that $Y_+\cup Y_-\neq Y$, i.e., there exists $(y_1,\dotsc,y_{d+1})\in Y$ such that the points
    $y_1,\dotsc,y_{d+1}$ are coplanar.
\end{proof}

The following is a converse to \Cref{lemma:SameTypeFromConnected} under slightly different conditions.

\begin{lemma}
\label{lemma:SameTypeFailing}
Suppose that the family of sets $Y_1, \dots,  Y_{d+1} \subset \R^d$ is in general position. If some hyperplane intersects each of the sets $\conv{Y_i}$, then $Y_1, \dots , Y_{d+1}$ do not have the same-type property.
\end{lemma}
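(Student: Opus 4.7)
The plan is to exploit the multi-affinity of the signed-volume determinant on a convex combination that witnesses the hyperplane intersection. Write $f(z_1,\dotsc,z_{d+1})$ for the determinant of the $(d+1)\times(d+1)$ matrix whose $i$-th row is $(z_i,1)$; its sign is $\orient(z_1,\dotsc,z_{d+1})$, it vanishes exactly when the $z_i$ are coplanar, and crucially it is affine in each argument separately.

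Let $H$ be a hyperplane meeting every $\conv Y_i$, and choose $y_i^*\in H\cap \conv Y_i$. Since all the $y_i^*$ lie on the common hyperplane $H$, we have $f(y_1^*,\dotsc,y_{d+1}^*)=0$. Express each $y_i^*=\sum_j \lambda_{ij} y_{ij}$ as a convex combination of points $y_{ij}\in Y_i$ with $\lambda_{ij}\ge 0$ and $\sum_j \lambda_{ij}=1$. By multi-affinity,
\[
 0 \;=\; f(y_1^*,\dotsc,y_{d+1}^*) \;=\; \sum_{j_1,\dotsc,j_{d+1}} \lambda_{1j_1}\dotsm \lambda_{(d+1)j_{d+1}}\, f(y_{1j_1},\dotsc,y_{(d+1)j_{d+1}}).
\]
The coefficients are nonnegative and sum to $1$, so the support of this sum is nonempty. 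Since the $Y_i$ are disjoint and their union is in general position, each $(d+1)$-tuple $(y_{1j_1},\dotsc,y_{(d+1)j_{d+1}})$ consists of distinct points that are not coplanar, and therefore every $f(y_{1j_1},\dotsc,y_{(d+1)j_{d+1}})\neq 0$.

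A vanishing sum of nonzero terms with positive weights must contain summands of both signs; in particular the support has at least two elements and there exist tuples $(j_1,\dotsc,j_{d+1})$ and $(j_1',\dotsc,j_{d+1}')$ realizing opposite signs. These give two $(d+1)$-tuples from $Y_1\times\dotsb\times Y_{d+1}$ with opposite orientations, so the sets $Y_1,\dotsc,Y_{d+1}$ fail the same-type property.

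The routine parts are the multi-affinity of $f$ and the convex-combination expansion. The only place requiring genuine care is the appeal to general position to rule out zero terms in the expansion; without it, the cancellation above might be achieved by a single zero summand rather than by summands of opposite signs, which is why the general-position hypothesis (absent from \Cref{lemma:SameTypeFromConnected}) is needed here.
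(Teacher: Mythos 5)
Your proof is correct, and it takes a genuinely different route from the paper's. The paper argues geometrically: by Carath\'eodory one may assume the $Y_i$ finite, then one perturbs $H$ until it passes through one point of each of $d$ sets (say $y_1,\dots,y_d$); general position forces $H$ to miss $Y_{d+1}$, and since $H$ still meets $\conv Y_{d+1}$, there are points of $Y_{d+1}$ on both sides, producing two tuples $(y_1,\dots,y_d,y_{d+1})$ and $(y_1,\dots,y_d,y_{d+1}')$ with opposite orientations. You instead use the multi-affinity of the signed-volume determinant to expand $f(y_1^*,\dots,y_{d+1}^*)=0$ as a convex combination of the values $f$ takes on tuples from $Y_1\times\dots\times Y_{d+1}$; general position guarantees all these values are nonzero, so a vanishing positive combination forces terms of both signs. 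The roles of the hypotheses are sharper in your version: Carath\'eodory is only used to keep the convex combinations finite, no perturbation of $H$ is required, and the need for general position is laid bare as exactly the fact that no summand can be zero. The paper's perturbation argument is perhaps more geometric in flavor, but yours is shorter and makes the counting of degrees of freedom (implicit in ``perturb $H$ so that it contains points of $d$ sets'') unnecessary.
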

\begin{proof}
  Let $H$ be any such hyperplane. We may assume that the sets $Y_1,\dotsc,Y_{d+1}$ are finite. Indeed, using Carath\'eodory's theorem,
  we may replace $Y_i$ by a subset of at most $d+1$ points whose convex hull contains a point of $H\cap \conv{Y_i}$.

  Keeping the condition $H\cap \conv{Y_i}\neq \emptyset$,
  perturb $H$ so that it contains points of $d$ sets among $Y_1, \dots, Y_{d+1}$, say the points
  $y_1\in Y_1,\dotsc,y_d\in Y_d$.  Since $Y_1\cup\dots\cup Y_{d+1}$ is in general position,
  it follows that $H$ contains no point of $Y_{d+1}$. Because $H$ does intersect $\conv{Y_{d+1}}$, the set
  $Y_{d+1}$ contains points $y_{d+1}^{\vphantom{'}}$ and $y_{d+1}'$ that lie on the opposite sides of $H$.
  So, the orientations of the tuples $(y_1^{\vphantom{'}},\dotsc,y_d^{\vphantom{'}},y_{d+1}^{\vphantom{'}})$ and $(y_1^{\vphantom{'}},\dotsc,y_d^{\vphantom{'}},y_{d+1}')$
  are opposite, which contradicts the same-type property of $Y_1,\dotsc,Y_{d+1}$.
\end{proof}

Also, in further proofs it will be useful for us to have enough points in each set. The following lemma implies that small constructions can be blown up to arbitrarily large size,
with no impact on the same-type constant $c(\,\cdots)$.

\begin{lemma}
    \label{lemma:pointsBlowing}
    Suppose that $X_1, \dots, X_m$ in $\R^d$ is a family of sets in general position. Denote by $X_1^{(n)}$ the set obtained by replacing each point of $X_1$ by cloud of $n$ points lying close enough to the original and preserving general position. Then,
    \[
    c(X_1^{\vphantom{()}}, \dots, X_m^{\vphantom{()}}) = c(X_1^{(n)}, X_2^{\vphantom{()}}, \dots, X_m^{\vphantom{()}}). 
    \]
\end{lemma}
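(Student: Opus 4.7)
The plan is to prove the equality by establishing both inequalities separately. The crucial point, on which both directions rest, is the interpretation of ``close enough'': since the sets are finite and orientation is a continuous function away from coplanarities, there are only finitely many orientation conditions to check, so one can pick the clouds small enough that for every $(d+1)$-tuple from the original configuration $X_1\cup\dotsb\cup X_m$, replacing the point of $X_1$ in it by any point from its cloud does not change the orientation. The assumption that $X_1^{(n)}\cup X_2\cup\dotsb\cup X_m$ remains in general position guarantees that none of these orientations is zero, and the preservation of orientations is exactly what allows the same-type property to transfer back and forth between $X_1$ and $X_1^{(n)}$.

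For the inequality $c(X_1^{\vphantom{()}},\dotsc,X_m^{\vphantom{()}})\le c(X_1^{(n)},X_2^{\vphantom{()}},\dotsc,X_m^{\vphantom{()}})$, I would start from witnessing subsets $Y_i\subseteq X_i$ with $|Y_i|\ge c|X_i|$ that have the same-type property, and replace $Y_1$ by the union $Y_1^{(n)}$ of the clouds over the points of $Y_1$, keeping $Y_i$ for $i\ge 2$. Since each point of $X_1$ is replaced by exactly $n$ clones, $|Y_1^{(n)}|=n|Y_1|\ge cn|X_1|=c|X_1^{(n)}|$. Any $(d+1)$-tuple picked from $Y_1^{(n)}, Y_2,\dotsc, Y_m$ uses at most one cloud-point, and by the choice of cloud radius its orientation equals that of the corresponding tuple from $Y_1, Y_2,\dotsc, Y_m$, hence is determined.

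For the reverse inequality $c(X_1^{(n)},X_2^{\vphantom{()}},\dotsc,X_m^{\vphantom{()}})\le c(X_1^{\vphantom{()}},\dotsc,X_m^{\vphantom{()}})$, I would start from witnesses $Y_1^{(n)}\subseteq X_1^{(n)}$ and $Y_i\subseteq X_i$ (for $i\ge 2$), and define $Y_1\eqdef\{x\in X_1 : \text{cloud}(x)\cap Y_1^{(n)}\neq\emptyset\}$, i.e., the set of original points whose cloud is hit by the chosen subset. Since each cloud contributes at most $n$ members to $Y_1^{(n)}$, $|Y_1|\ge |Y_1^{(n)}|/n\ge c'|X_1^{(n)}|/n=c'|X_1|$. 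For the same-type property of $Y_1,Y_2,\dotsc,Y_m$, for each $y_1\in Y_1$ I would pick some representative $y_1^{(n)}\in\text{cloud}(y_1)\cap Y_1^{(n)}$; the orientation of any $(d+1)$-tuple using $y_1$ equals that of the same tuple with $y_1$ replaced by $y_1^{(n)}$, which is determined by the same-type property of $Y_1^{(n)},Y_2,\dotsc,Y_m$.

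There is no substantive obstacle; the only thing to be careful about is making the ``close enough'' hypothesis precise, and this amounts to noting that the finite collection of $(d+1)$-tuples through $X_1\cup\dotsb\cup X_m$ yields a finite list of continuous orientation functions that are nonvanishing by the general-position hypothesis. The lemma is essentially a bookkeeping device, setting up a bijection between size-preserving, same-type-preserving subfamilies of $X_1$ and of $X_1^{(n)}$.
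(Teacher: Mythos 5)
Your proof is correct and follows essentially the same route as the paper: you establish the two inequalities by the same two constructions (blowing $Y_1$ up to the union of its clouds, and projecting a subset of $X_1^{(n)}$ down to the set of original points whose cloud it meets), with the same cardinality bookkeeping. Your extra remarks spelling out what ``close enough'' means — finitely many $(d+1)$-tuples, each giving a nonvanishing continuous orientation function — just make explicit what the paper leaves implicit.
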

\begin{proof}
  First, we prove the inequality $c(X_1^{\vphantom{()}}, \dots , X_m^{\vphantom{()}}) \le c(X_1^{(n)}, \dots, X_m^{\vphantom{()}})$. Consider arbitrary subsets $Y_1 \subseteq X_1, \dots, Y_m \subseteq X_m$ having the same-type property. Take the subset $Y_1' \subseteq X_1^{(n)}$, consisting of all points of clouds corresponding to the points of $Y_1$.
  Since the  cloud are sufficiently small,  $Y_1', Y_2^{\vphantom{''}},\dots, Y_m^{\vphantom{''}}$ also have the same-type property and $\abs{Y_1} / \abs{X_1} = \abs{Y_1'} / \abs{X_1^{(n)}}$. 

  Next, we prove the inequality $c(X_1^{\vphantom{()}}, \dots ,X_m^{\vphantom{()}}) \ge c(X_1^{(n)}, \dots, X_m^{\vphantom{()}})$. Suppose that the subsets $(Y_1', Y_2^{\vphantom{'}}, \dots, Y_m^{\vphantom{'}})$ of $(X_1^{(n)}, X_2^{\vphantom{()}}, \dots, X_m^{\vphantom{()}})$ have the same-type property.
  Define
  \[
    Y_1\eqdef \{x\in X_1 : Y_1'\text{ contains a point of the cloud around }x\}. 
  \]
  If each cloud lies sufficiently near the original point, the sets $Y_1, \dots, Y_m$ have the same-type property, and $|Y_1| / |X_1|$ is at least $|Y_1'| / |X_1^{(n)}|$.
\end{proof}

 This lemma implies that, in the definition of $c(m,d)$ it is enough to consider only the sets of the same size, which may be assumed to exceed an arbitrarily large constant.
 
\paragraph{Polynomial partitioning.}
 For the proof of the lower bound  we use the polynomial partitioning introduced by Guth and Katz. Since the proof in \cite{distanceNumber} does not track the dependence on $d$, we include the relevant calculation. The next lemma is a version of Theorem 4.1 in \cite{distanceNumber} with a fully explicit bound. 
\begin{lemma}
\label{lemma:PolynomialPartitioning}
    If $X$ is a set of $n$ points in $\R^d$ and $J \ge 1$ is an integer, then there is a polynomial surface $Z$ of degree $D \le 3d^2 2^{J/d}$  with the following property: each connected component of $\R^d \setminus Z$  contains at most $2^{-J} n$ points of $X$.
\end{lemma}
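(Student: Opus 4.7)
The plan is to use the standard iterated polynomial ham-sandwich argument of Guth--Katz, while carefully tracking the dependence of the degree on $d$. The input is the polynomial ham-sandwich theorem: for any finite sets $A_1,\dotsc,A_k\subset\R^d$, there is a non-zero polynomial $f$ of degree at most $r$ whose zero set bisects each $A_i$ (meaning that neither open half $\{f>0\}$ nor $\{f<0\}$ contains more than $|A_i|/2$ points of $A_i$), provided $\binom{d+r}{d}\ge k+1$. This is the classical consequence of Borsuk--Ulam applied to the odd map sending a unit polynomial in the $\binom{d+r}{d}$-dimensional space of polynomials of degree $\le r$ to its tuple of signed imbalances on the $k$ sets.

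First I would construct polynomials $f_1,\dotsc,f_J$ inductively. After $f_1,\dotsc,f_{j-1}$ are chosen, the sign pattern $(\sign f_1,\dotsc,\sign f_{j-1})$ partitions $X$ into at most $2^{j-1}$ subsets, each of size at most $2^{-(j-1)}n$. Applying the polynomial ham-sandwich theorem to these at most $2^{j-1}$ subsets yields $f_j$, where $r_j\eqdef\deg f_j$ is the smallest integer with $\binom{d+r_j}{d}\ge 2^{j-1}+1$. After $J$ steps let $Z$ be the zero locus of $f=f_1f_2\dotsm f_J$; because each $\sign f_j$ is continuous on $\R^d\setminus Z(f_j)$, it is constant on each connected component of $\R^d\setminus Z$, so every such component lies in a single cell of the full sign pattern and therefore meets $X$ in at most $2^{-J}n$ points.

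It then remains to bound $\deg f=\sum_j r_j$. The elementary estimate $\binom{d+r}{d}\ge (1+r/d)^d$ shows that $r_j\le d\cdot 2^{j/d}$ suffices (the ceiling that arises when solving $(1+r_j/d)^d\ge 2^j$ is absorbed since $d\ge 1$). Summing the resulting geometric series,
\[
  \deg f \;\le\; d\sum_{j=1}^{J}2^{j/d} \;\le\; \frac{d\cdot 2^{(J+1)/d}}{2^{1/d}-1},
\]
and the inequality $2^{1/d}-1\ge (\ln 2)/d$ (an immediate consequence of $e^x\ge 1+x$) gives $\deg f\le \tfrac{2}{\ln 2}\,d^2\cdot 2^{J/d}<3d^2\cdot 2^{J/d}$, as required. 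The argument itself is routine; the only step requiring care is squeezing the constant $3$ out of the degree calculation, which depends on pairing the sharpest convenient lower bound on $\binom{d+r}{d}$ with the $e^x\ge 1+x$ estimate on $2^{1/d}-1$.
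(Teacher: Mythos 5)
Your proof is correct and follows essentially the same route as the paper: iterated polynomial ham-sandwich yielding sign cells of geometrically shrinking size, taking the product of the bisecting polynomials, and summing a geometric series bounded via $2^{1/d}-1\ge(\ln 2)/d$. The only differences are cosmetic (indexing and the exact form of the per-step degree bound $r_j\le d\cdot 2^{j/d}$ versus the paper's $d\cdot 2^{(j-1)/d}+1$), and your final arithmetic $\tfrac{2}{\ln 2}<3$ is actually slightly cleaner than the paper's.
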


We rely on the polynomial ham sandwich theorem. We say that the real algebraic hypersurface $\{x\in \R^d : f(x)=0\}$ \emph{bisects} a point set $X$ if
both sets $\{ x \in \R^d : f(x) > 0 \}$ and $\{x \in \R^d : f(x) < 0 \}$ contain at most half of the points of~$X$. 

\begin{lemma}[Corollary 4.3 in \cite{distanceNumber}]
\label{lemma:findpolynom}
    Let $X_1, \dots , X_M$ be finite sets of points in $\R^d$ with $M = \binom{D+d}{d} - 1$. Then there is a real algebraic hypersurface of degree at most $D$ that bisects each $X_i$.
\end{lemma}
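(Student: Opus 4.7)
The plan is to apply the Borsuk--Ulam theorem in the classical ham-sandwich style. Let $V$ be the vector space of real polynomials in $x_1,\dotsc,x_d$ of degree at most $D$. Its dimension is $N=\binom{D+d}{d}=M+1$, so after fixing a linear isomorphism $V\cong\R^{M+1}$ the unit sphere of $V$ is a copy of $S^{M}$. Any nonzero $f\in V$ determines a hypersurface $\{f=0\}$ of degree at most $D$, and rescaling $f$ does not change this hypersurface, so it suffices to locate an $f\in S^M$ whose zero set bisects every $X_i$.

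For each $i\in\{1,\dotsc,M\}$ we would like to assign a real number that measures how unbalanced $X_i$ is with respect to $f$. The naive count $\abs{X_i\cap\{f>0\}}-\abs{X_i\cap\{f<0\}}$ is not continuous in $f$, because points can pass through the zero set of $f$ as $f$ varies. The standard remedy is to thicken each point into a tiny ball: replace $X_i$ by a disjoint union $X_i^{\rho}$ of balls of radius $\rho$ centered at its points, where $\rho$ is small enough that these balls do not meet one another and any limiting analysis we care about is unaffected. Set
\[
  g_i(f)\eqdef\mathrm{vol}\bigl(X_i^{\rho}\cap\{f>0\}\bigr)-\mathrm{vol}\bigl(X_i^{\rho}\cap\{f<0\}\bigr).
\]
Each $g_i:S^M\to\R$ is continuous and satisfies $g_i(-f)=-g_i(f)$, so the assembled map $G=(g_1,\dotsc,g_M):S^M\to\R^M$ is continuous and antipodally odd. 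By the Borsuk--Ulam theorem there exists $f\in S^M$ with $G(f)=0$, i.e., the zero set of $f$ cuts each $X_i^{\rho}$ into two pieces of equal volume.

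Finally, a volume-based bisection of $X_i^{\rho}$ forces a bisection of $X_i$ in the counting sense: if strictly more than half of the points of $X_i$ lay on one side of $\{f=0\}$, then for sufficiently small $\rho$ the corresponding volume of $X_i^{\rho}$ would exceed half of $\mathrm{vol}(X_i^{\rho})$, contradicting $g_i(f)=0$. (One may make $\rho$ depend on $f$ in advance or, more cleanly, take a sequence $\rho_k\to 0$, extract a convergent subsequence of the corresponding $f$'s from the compact sphere $S^M$, and use continuity.) Hence the real algebraic hypersurface $\{f=0\}$, which has degree at most $D$, bisects each $X_i$.

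The only real obstacle is the discontinuity of the obvious signed count; the ball-thickening device above, which is the standard trick in this setting, resolves it. No control on $\deg f$ beyond the a priori bound $D$ is needed, and $f\not\equiv 0$ is automatic since $f\in S^M$.
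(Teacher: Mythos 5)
Your proof is correct, and it fills a gap the paper leaves open: the paper states this lemma as a direct citation to Corollary 4.3 of Guth and Katz without reproducing a proof. Your Borsuk--Ulam argument with ball-thickening is the standard proof of the polynomial ham-sandwich theorem and is essentially the one in that reference, so the approach matches. Two small points worth tightening in a written-up version: (i) continuity of each $g_i$ should be justified by noting that the zero set of a nonzero polynomial has Lebesgue measure zero, so $\mathbf{1}_{\{f_n>0\}}\to\mathbf{1}_{\{f>0\}}$ almost everywhere as $f_n\to f$ in $S^M$ and dominated convergence applies; and (ii) the parenthetical alternative ``make $\rho$ depend on $f$ in advance'' is circular as phrased, so the clean version is the one you also sketch — take $\rho_k\to 0$, obtain $f_k\in S^M$ bisecting each thickened $X_i^{\rho_k}$, extract $f_{k_j}\to f$ by compactness of $S^M$, and then use that bounded-degree polynomials with unit coefficient vector are uniformly Lipschitz on compacta to conclude that, if strictly more than half of $X_i$ had $f>0$, then for $j$ large the $\rho_{k_j}$-balls around those points would lie entirely in $\{f_{k_j}>0\}$, contradicting $g_i(f_{k_j})=0$.
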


\begin{proof}[Proof of the \Cref{lemma:PolynomialPartitioning}.]
  Given polynomials $p_1,\dotsc, p_j$ and a sign vector $\veps=(\veps_1,\dotsc,\veps_j)\in \{-1,+1\}^j$,
  consider the cell of $\R^d$ on which the first $j$ polynomials have these signs, i.e.,
  \[
    C_{\veps}\eqdef \{x\in \R^d : \sign p_1(x)=\veps_1,\dotsc,\sign p_j(x)=\veps_j\}
  \]
  Write $X_{\veps}\eqdef X\cap C_{\veps}$.
  
  We claim that there are polynomials $p_1, \dots, p_J$ of degrees $\deg p_{j+1}\leq d2^{j/d}$
  such that $\abs{X_\veps}\leq \abs{X}2^{-j}$ for every $j\leq J$ and every $\veps\in  \{-1,+1\}^j$.

  We find such polynomials one by one. Suppose that the first $j$ polynomials $p_1, \dots, p_j$ have been defined,
  and that all $2^j$ sets $X_{\veps}$ with $\veps\in \{-1,+1\}^j$ satisfy the condition above.
  Observe the inequality
 \[
    \binom{d 2^{j/d} + d}{d} = \prod_{i=0}^{d-1} \frac{d2^{j/d}+ d - i}{d-i} > \prod_{i=0}^{d-1}2^{j/d} = 2^j.
 \]
Taking this into account, \Cref{lemma:findpolynom} allows us to find a polynomial $p_{j+1}$ of degree at most $d 2^{j/d} + 1$ whose zero set bisects each~$X_{\veps}$. \medskip

Let $p$ be the product of $p_1, \dots, p_J$ and let $Z$ be its zero set, we claim that $Z$ is the desired hypersurface. The degree of $p$ is at most
\[
 \sum_{j=0}^{J-1} \left(d2^{j/d}+1\right) = d \left(1+\frac{2^{J/d}-1}{2^{1/d}-1}\right) \le  3 d^2 2^{J/d}.
\]

Since each connected component of $\R^d\setminus Z$ is a subset of some $C_{\veps}$, and therefore contains at most $2^{-J}\abs{X}$ points of $X$, the lemma follows.
\end{proof}

Also, we we will need the following bound due to Warren. One of its consequences is that
the number of parts into which the surface from \Cref{lemma:PolynomialPartitioning} cuts $\R^d$
is only slightly larger than~$2^J$.

\begin{lemma}[Lemma 6.2 in \cite{hyperplanebound}]
\label{lemma:componentsnumberbound}
    Let $f$ be a real polynomial of degree $D$ in $d$ variables. Then the number of connected components of $\R^d \setminus Z(f)$ is at most $6(2D)^d$.
\end{lemma}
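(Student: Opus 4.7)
The plan is Warren's classical $\veps$-smoothing argument, which reduces the problem to bounding the number of connected components of the complement of a smooth real algebraic hypersurface, followed by the Oleinik--Petrovsky--Milnor--Thom bound on the sum of Betti numbers of a real algebraic set.

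First, I would replace the possibly singular set $Z(f)$ by a smooth hypersurface. Set $g_\veps(x)\eqdef f(x)^2-\veps$; this is a polynomial of degree $2D$ in $d$ variables. By Sard's theorem applied to $f^2\colon\R^d\to\R$, for almost every $\veps>0$ the number $\veps$ is a regular value of $f^2$, so $Z(g_\veps)=\{f^2=\veps\}$ is a smooth hypersurface. This removes all the analytic pathology that $Z(f)$ could have inherited from singular or lower-dimensional components.

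Second, I would compare components: for any large ball $B\subseteq\R^d$ and for $\veps>0$ small enough (depending on $B$), the number of connected components of $B\setminus Z(f)$ is at most the number of connected components of $\R^d\setminus Z(g_\veps)$. Indeed, $B\setminus Z(f)$ is a bounded semialgebraic set and hence has only finitely many components; each component $U$ is open and nonempty, so it contains a point $x_U$ with $f(x_U)^2>0$. Choosing $\veps$ smaller than the minimum of the finitely many values $f(x_U)^2$ puts every $x_U$ inside $\{g_\veps>0\}$. Since $Z(f)\subseteq\{g_\veps<0\}$, two distinct components of $B\setminus Z(f)$ cannot be joined inside $\{g_\veps>0\}$, so they lie in distinct components of $\R^d\setminus Z(g_\veps)$. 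Exhausting $\R^d$ by an increasing sequence of balls $B$ then transfers any uniform bound on the right-hand side to one on the left-hand side.

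Third, I would invoke the Oleinik--Petrovsky--Milnor--Thom estimate: for a real polynomial $g$ of degree $\Delta$ in $d$ variables, the number of connected components of $\R^d\setminus Z(g)$ is bounded by an explicit polynomial expression of order $\Delta^d$ (see, e.g., Milnor, \emph{On the Betti numbers of real varieties}). Applied with $\Delta=2D$, this yields a bound of the desired form $C\cdot(2D)^d$. The main obstacle is bookkeeping the absolute constant: extracting exactly the prefactor $6$ requires tightening the Milnor--Thom estimate with a correction either for the unique unbounded component or by applying the bound in a compactified setting. The $\veps$-smoothing and the injection of components are conceptually routine once the regular-value selection is set up; all the real work is in turning the qualitative $O(D^d)$ into an explicit $6(2D)^d$.
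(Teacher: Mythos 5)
The paper does not prove this lemma; it cites it verbatim as Lemma~6.2 of Warren's paper and uses it as a black box, so there is no in-paper argument to compare against. Evaluating your sketch on its own terms: the skeleton (regularize the level set, inject components, invoke Milnor--Thom) is indeed the classical one, but two steps fail as written.

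The injection through the ball $B$ is not justified. You claim that two distinct components of $B\setminus Z(f)$ cannot be joined inside $\{g_\veps>0\}$ because $Z(f)\subseteq\{g_\veps<0\}$; but that only shows a connecting path avoids $Z(f)$, not that it stays in $B$. Two components of $B\setminus Z(f)$ can belong to a single component of $\R^d\setminus Z(f)$ (joined by an arc that exits $B$), and for $\veps$ small that arc lies in $\{f^2>\veps\}$, so the map on components need not be injective. The detour through $B$ is in any case superfluous: $\R^d\setminus Z(f)$ is semialgebraic and thus has finitely many components, so one may pick a witness $x_U$ in each component of $\R^d\setminus Z(f)$ directly, choose $\veps<\min_U f(x_U)^2$, and then a path in $\{f^2>\veps\}\subseteq\R^d\setminus Z(f)$ joining two witnesses would contradict their being in distinct components.

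More seriously, the smoothing $g_\veps=f^2-\veps$ is the wrong vehicle for the stated constant, and this is not merely bookkeeping. Milnor's bound for a degree-$k$ hypersurface in $\R^d$ is of order $k(2k-1)^{d-1}$; applied to $g_\veps$ of degree $2D$ it yields roughly $2D(4D-1)^{d-1}=\Theta\bigl((4D)^d\bigr)$, which exceeds $6(2D)^d$ once $d\ge4$. So your third step, applied to $g_\veps$ as a single degree-$2D$ polynomial, cannot produce the claimed estimate. The standard fix (which is what gives the base $2D$) is to bound the two degree-$D$ level sets $Z(f-\delta)$ and $Z(f+\delta)$ separately, i.e.\ exploit the factorization $f^2-\delta^2=(f-\delta)(f+\delta)$, each factor contributing $D(2D-1)^{d-1}$, for a total of order $(2D)^d$ with room to spare for the prefactor $6$.
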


\section{Proof of \texorpdfstring{\Cref{theorem:maintheorem}}{Theorem 1}: the lower bound}

Consider any family $X_1, \dots, X_m \subseteq \R^d$ in general position. Thanks to \Cref{lemma:pointsBlowing}, we may assume that all of them have the same size $n$, which is sufficiently large.  Also, since slight perturbations of points do not change the orientation, by perturbing the sets generically we may additionally assume that, for any $D$, no polynomial surface of degree $D$ intersects more than $\binom{D+d}{d}-1$ points.  

Fix $r \eqdef m^{d^2}d^{30d^3}$. For each $i$ apply \Cref{lemma:PolynomialPartitioning} with $J =  \lceil \log_2{r} \rceil $ to obtain polynomial surface $Z_i$ of degree at most $D_0\eqdef 6d^2 r^{1/d} $ such that each connected component of $\R^d \setminus Z_i$  contains at most  $n / r$ points of $X_i$. By \Cref{lemma:componentsnumberbound}, the total number of such components is at most $k \eqdef 6\cdot12^dd^{2d}r \le m^{d^2} d^{50d^3} / 4$. Denote by $\mathcal{C}_i$ the set of these components. Having taken $n$ large enough, we observe that
 each $Z_i$ contains at most $\binom{D_0+d}{d}-1\leq n/2$ points of $X_i$.
Some components have at most $n/4k$ points of $X_i$; these account for at most $n/4$ points of $X_i$ in all. Let $\mathcal{C}_i'$
be the set of components containing more than $n/4k$ points. Then, $\abs{\mathcal{C}_i'}\geq  (n - n/4- n/2)/(n/r) = (n/4)/(n/r)=r/4$.

Next, we define an auxiliary $(d+1)$-uniform $m$-partite hypergraph $H$ with parts $\mathcal{C}_1',\mathcal{C}_2',\dotsc,\mathcal{C}_m'$.
Then, for any distinct $i_1,i_2,\dotsc,i_{d+1} $ and any $C_{i_1}\in \mathcal{C}_{i_1}',C_{i_2}'\in \mathcal{C}_{i_2}',\dotsc,C_{i_{d+1}}\in \mathcal{C}_{i_{d+1}}'$
put an edge between vertices $C_{i_1},C_{i_2},\dotsc,C_{i_{d+1}}$ if and only if these components can be pierced by a single hyperplane.
The key observation is that the hypergraph $H$ is sparse, thanks to the polynomial partitioning.

\begin{lemma}
    \label{lemma:edgenumberbound}
    Any $d+1$ parts of $H$ span at most $d^{20d^2}r^{d+1-1/d}$ transversal edges. 
\end{lemma}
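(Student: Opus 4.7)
The plan is to bound the number of transversal $(d+1)$-tuples by combining a single-hyperplane bound on cells intersected (via Warren) with a bound on the number of ``combinatorially distinct'' witness hyperplanes.

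First, for a fixed hyperplane $H$, the restriction $H\cap Z_i$ is a real algebraic hypersurface in $H\cong \R^{d-1}$ of degree at most $D_0$. By \Cref{lemma:componentsnumberbound}, $H\setminus (H\cap Z_i)$ has at most $N\eqdef 6(2D_0)^{d-1}$ connected components, each lying in a single cell of $\mathcal{C}_i$. Hence $H$ intersects at most $N$ cells of $\mathcal{C}_i'$, so $H$ certifies at most $N^{d+1}$ transversal tuples among the chosen $d+1$ parts.

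Next, I would construct a family $\mathcal{F}$ of hyperplanes such that every transversal tuple has at least one certificate in $\mathcal{F}$, and argue that $\abs{\mathcal{F}}\leq d^{O(d^2)} D_0^d$. The combinatorial type of $H$ (the precise set of cells it hits in each $\mathcal{C}_i'$) changes only as $H$ moves through a ``critical'' position in which it becomes tangent to, or passes through a singular locus of, one of the $Z_i$'s. These critical positions form algebraic subvarieties of the $d$-dimensional space of affine hyperplanes, and an arrangement estimate (using \Cref{lemma:componentsnumberbound} in the dual space) bounds the number of resulting regions; picking one representative per region gives the required $\mathcal{F}$. The required bound would then follow by multiplication: the number of transversal edges is at most
\[
 \abs{\mathcal{F}}\cdot N^{d+1}\le d^{O(d^2)}D_0^d\cdot\bigl[6(2D_0)^{d-1}\bigr]^{d+1}\le d^{O(d^2)} D_0^{d^2+d-1},
\]
which after substituting $D_0=6d^2r^{1/d}$ becomes $d^{O(d^2)}r^{(d^2+d-1)/d}=d^{O(d^2)}r^{d+1-1/d}$, safely within $d^{20d^2}r^{d+1-1/d}$ for the range of $d$ considered.

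The main obstacle is the second step: a crude application of Warren's bound to the arrangement of the dual varieties of the $Z_i$ (whose degrees can be as large as $D_0^d$) in $d$-dimensional hyperplane space gives only $O(D_0^{d^2})$ combinatorial types, which is a factor $D_0^{d-1}$ too many. To close this gap I would exploit the product structure of the partitioning polynomial $p_i=p_{i1}\dotsb p_{iJ}$: the ``critical'' events for the hit-pattern of a single cell of $\mathcal{C}_i$ are tangencies to individual low-degree factors $p_{ij}$, and a careful accounting per factor (rather than per $Z_i$) should replace the $D_0^d$ degree of the dual by something of order $D_0$, giving the desired $O(D_0^d)$ arrangement cells. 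Alternatively, one can try to parameterize witnesses by $d$ ``anchor'' points chosen from a discrete set of size $O(D_0)$ per surface (e.g.\ vertices of the $Z_i$-arrangement), again yielding $O(D_0^d)$ canonical witnesses.
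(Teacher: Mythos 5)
Your first step is fine: fixing a hyperplane $H$ and applying \Cref{lemma:componentsnumberbound} to the restriction $H\cap Z_i$ (a degree-$\le D_0$ hypersurface in $H\cong\R^{d-1}$) does bound the number of cells of $\mathcal{C}_i$ met by $H$ by $N=6(2D_0)^{d-1}$, so a single hyperplane certifies at most $N^{d+1}$ transversal tuples. However, your second step — producing a canonical family $\mathcal{F}$ of at most $d^{O(d^2)}D_0^{d}$ witness hyperplanes — is exactly the content that is missing, and you acknowledge this yourself. The dual-variety count you sketch gives $O(D_0^{d^2})$ arrangement cells, which overshoots the target of $O(D_0^{d})$ by a factor $D_0^{d(d-1)}$ (not $D_0^{d-1}$: the degree of each dual variety overshoots by $D_0^{d-1}$, and Warren then raises this to the $d$-th power). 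The two repair ideas you float — accounting per low-degree factor $p_{ij}$, or anchoring on a discrete set of size $O(D_0)$ per surface — are speculative: it is not clear why the arrangement of $d$-fold anchor choices or of per-factor dual hypersurfaces would drop the count to $O(D_0^d)$, and you give no argument. As written, the proof has a genuine gap at its main step.

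The paper takes a different and cleaner route that avoids discretizing hyperplane space altogether. It parameterizes a witness not by the hyperplane itself but by a tuple $(x_1,\dots,x_d,\alpha_1,\dots,\alpha_{d-1})\in\R^{d^2+d-1}$: here $x_i$ is a point of the $i$-th pierced cell for $i\le d$, and the affine combination $\sum_{i<d}\alpha_ix_i+(1-\sum\alpha_i)x_d$ supplies the point in the $(d+1)$-th cell. It then forms one polynomial $h=\prod_{i=1}^{d+1}f_i\circ l_i$ of degree at most $(d+1)D_0$ on this $(d^2+d-1)$-dimensional configuration space, observes that every transversal edge is realized by some connected component of the complement of $Z(h)$, and applies \Cref{lemma:componentsnumberbound} once, directly yielding $d^{O(d^2)}D_0^{d^2+d-1}=d^{O(d^2)}r^{d+1-1/d}$. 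This avoids any dual-variety or tangency analysis; the entire combinatorics of "which hyperplane certifies which tuple" is absorbed into the extra $d-1$ affine-weight coordinates. If you want to salvage your approach, the hard part is precisely to prove a genuinely sub-$D_0^{d^2}$ bound on the number of hit-patterns, and I would suggest instead adopting the configuration-space lifting, which makes that question moot.
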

\begin{proof}
    Without loss of generality, consider parts indexed by $1, \dotsc, d+1$. Denote the polynomials defining $Z_i$ by $f_i$.
    
    Consider the following $d+1$ linear maps from $\R^{d^2+d-1}=(\R^d)^d\times (\R^1)^{d-1}$ to $\R^d$.
     \begin{align*}
         l_1(x_1, \dots, x_d, \alpha_1, \dots, \alpha_{d-1}) &\eqdef x_1, \\
         l_2(x_1, \dots, x_d, \alpha_1, \dots, \alpha_{d-1}) &\eqdef x_2, \\
         \vdots\qquad\qquad\quad&                 \\
         l_d(x_1, \dots, x_d, \alpha_1, \dots, \alpha_{d-1}) &\eqdef x_d, \\
         l_{d+1}(x_1, \dots, x_d, \alpha_1, \dots, \alpha_{d-1}) &\eqdef \sum_{i=1}^{d-1} (\alpha_i x_i) + \Bigl(1 - \sum_{i=1}^{d-1}\alpha_i\Bigr)x_d.\\
    \intertext{Denote by $Z$ the algebraic hypersurface in $\R^{d^2+d-1}=(\R^d)^d\times (\R^1)^{d-1}$ defined by the  polynomial} 
    h(x_1,  \dots, x_d, \alpha_1, \dots , \alpha_{d-1})  &\eqdef \prod_{i=1}^{d+1}f_i\Big(l_i(x_1, \dots, x_d, \alpha_1, \dots, \alpha_{d-1}) \Big),
    \end{align*}
    and denote by $\mathcal{C}$ the set of corresponding connected components.

    By the definition of $h$, the image of $\R^{d^2+d-1} \setminus Z$ under any $l_i$ belongs to $\R^{d} \setminus Z_i$.  Moreover, since $l_i$ is continuous and the image of a connected set under a continuous map is connected, the image of any set in $\mathcal{C}$ contained in exactly one of the sets in $\mathcal{C}_i$. This way $\ell_1\times \ell_2\times \dots\times \ell_{d+1}$ induces a well-defined map $L\colon \mathcal{C} \to \mathcal{C}_1 \times \dots \times \mathcal{C}_{d+1}$. 
    
    We observe that any tuple  $(C_1,\dotsc,C_{d+1})\in \mathcal{C}_1'\times \dots \times \mathcal{C}_{d+1}' \subseteq \mathcal{C}_1\times \dots \times \mathcal{C}_{d+1}$ that forms an edge in $H$ is in the image of~$L$. Indeed, the sets $C_1,C_2,\dotsc,C_{d+1}$ are open; so if some hyperplane pierces them, one can find points $x_1 \in C_1, x_2\in C_2, \dotsc, x_{d+1} \in C_{d+1}$ such that $x_{d+1}$ is an affine combination of $x_1$ through $x_d$, say $x_{d+1}=\sum_{i=1}^d \alpha_i x_i$ with $\sum_{i=1}^d \alpha_i=1$. In this case, the image of $(x_1,\dotsc,x_d,\alpha_1,\dotsc,\alpha_{d-1})$ under $l_1\times l_2\times \dots\times l_{d+1}$ lies in $C_1\times C_2\times \dots\times C_{d+1}$.

    The reasoning above implies that the number of edges spanned by sets $\mathcal{C}_1',\dotsc,\mathcal{C}_{d+1}'$ in $H$ does not exceed $\abs{\mathcal{C}}$. Since polynomial $h$ depends on $d^2+d-1$ variables and has degree at most $12d^3r^{1/d}$, \Cref{lemma:componentsnumberbound} gives the bound of $6 (24d^3)^{d^2+d-1} r^{(d^2+d-1)/d} \le d^{20d^2}r^{d+1-1/d}$. 
\end{proof}

From each $\mathcal{C}_i'$ pick an element $C_i$ independently at random. We claim that, with positive probability, the $d+1$ vertices $C_1,\dotsc,C_{d+1}$ form an independent set in $H$.
This would imply that the sets $C_1\cap X_1,C_2\cap X_2,\dotsc,C_{d+1}\cap X_{d+1}$ have the same-type property. Since each of these has at last $n/4k$ elements, that would conclude the
proof.

The claim follows from Lov\'asz Local Lemma. Indeed, for the set $I \subset [m]$ of size $d+1$ denote by $\mathcal{B}_I$ the event that vertices $C_i \in \mathcal{C}_i'$ with $i \in I$ form an edge in $H$. Since $\abs{\mathcal{C}_i'}\geq r/4$,
\Cref{lemma:edgenumberbound} shows that probability of such event is at most $4^{d+1} d^{20d^2} r^{-1/d}$. If $I\cap J=\emptyset$, then the events $\mathcal{B}_I$ and $\mathcal{B}_J$
are defined by disjoint sets of random choices. Hence, the natural dependence graph has degree at most
$(d+1)\binom{m}{d}\leq (d+1)(me)^d / d^d$. Since with our choice of the constant $r$ we have
\[
e \frac{(d+1)(me)^d4^{d+1} d^{20d^2} }{r^{1/d}d^d} \le    \frac{d^{30d^2}m^d }{r^{1/d}} =1,
\]
the condition of the symmetric Local Lemma (see e.g., \cite[Corollary 5.1.2]{alon_spencer}) is satisfied and the event $\bigcap_I \mathcal{B}_I$ holds with positive probability.

\section{Proof of \texorpdfstring{\Cref{theorem:maintheorem}}{Theorem 1}: the upper bound}

To prove an upper bound on $c(r,m)$, we provide a series of constructions of arbitrarily large sets $X_1, \dots, X_m$ without large subsets with the same-type property. 
A set $P\subset \R^d$ of size $\binom{n}{d}$ is a \emph{grid set} if there exist $n$ hyperplanes in $\R^d$ whose set
of $d$-wise intersections is $P$. Our constructions will be suitable small perturbations of grid sets.
The purpose of the perturbation is to ensure general position.

Convex sets intersect the grid sets and their perturbations slightly differently. The next lemma says that
the difference is small, because the boundary of a convex set meets a grid set in a negligible fraction of points. 

Denote by $\partial C$ the boundary of a set $C\subset \R^d$.
\begin{lemma}
\label{lemma:boundarypointsbound}
    Suppose that $P\subset \R^d$ is a grid set of size $\binom{n}{d}$. Then for any compact convex set $C$ we have $|\partial C \cap P| \le 2\binom{n}{d-1}$. 
\end{lemma}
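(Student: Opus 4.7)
The plan is to associate to each line $\ell$ in the collection $\mathcal{L}$ of $\binom{n}{d-1}$ intersections of $d-1$ of the $n$ defining hyperplanes at most $2$ \emph{marks} --- namely the (at most $2$) endpoints of the closed interval $\ell\cap C$ --- and then show that every $p\in\partial C\cap P$ is a mark on at least one line through~$p$. A double count of $(p,\ell)$ pairs with $p$ a grid-point mark on $\ell$ then yields $|\partial C\cap P|\le 2|\mathcal{L}|=2\binom{n}{d-1}$.

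Fix $p\in\partial C\cap P$. The $d$ hyperplanes $H_1,\dots,H_d$ through~$p$ give rise to $d$ lines $\ell_k=\bigcap_{j\neq k}H_j$ in $\mathcal{L}$ through~$p$, with direction vectors $e_1,\dots,e_d$. Since $e_k$ is orthogonal to the normal of $H_j$ for every $j\neq k$, and the normals of $H_1,\dots,H_d$ are linearly independent by the general-position assumption, the vectors $e_1,\dots,e_d$ form a basis of $\R^d$. Suppose, toward a contradiction, that $p$ is not a mark on any $\ell_k$. Then for every $k$ the interval $\ell_k\cap C$ is a proper closed segment with $p$ strictly in its relative interior, so a small movement along $\pm e_k$ from $p$ stays inside $C$. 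Consequently, both $+e_k$ and $-e_k$ lie in the tangent cone of $C$ at $p$ for every $k$. But the convex cone generated by $\pm e_1,\dots,\pm e_d$ is all of $\R^d$, forcing this tangent cone to be $\R^d$ and hence $p\in\mathrm{int}(C)$ --- contradicting $p\in\partial C$.

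Hence every $p\in\partial C\cap P$ is a mark on at least one line $\ell_k$ through~$p$, completing the double count. The main subtlety is the tangent-cone argument together with the general-position input that the directions $e_1,\dots,e_d$ span $\R^d$; the degenerate cases where $\ell_k\cap C$ is empty or a single point are handled automatically, since then $p$ is trivially an endpoint of $\ell_k\cap C$ whenever $p$ lies on $\ell_k$. Everything else is routine counting.
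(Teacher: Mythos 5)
Your proof is correct but takes a genuinely different route from the paper's. The paper argues by induction on $d$: for $p\in\partial C\cap P$ it fixes a supporting hyperplane $H$ of $C$ at $p$, notes that at least $d-1$ of the $d$ grid hyperplanes through $p$ differ from $H$, and for each such $H_i$ deduces $p\in\partial(C\cap H_i)$ (since $H\cap H_i$ supports $C\cap H_i$ inside $H_i$); the resulting inequality $(d-1)\,|\partial C\cap P|\le\sum_i|\partial(C\cap H_i)\cap P|$, combined with the inductive bound for the $(d-1)$-dimensional grid sets $P\cap H_i$, gives the claim. You instead collapse the recursion into a single double count over the $\binom{n}{d-1}$ lines formed by $(d-1)$-wise intersections, pairing each boundary grid point with at least one line through it on which it is a chord endpoint. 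Your tangent-cone step is the geometric dual of the paper's supporting-hyperplane step: both express that a boundary point cannot look like an interior point from all $d$ grid directions at once. The flat double count is slightly slicker in that it avoids induction, at the mild extra cost of explicitly verifying that the line directions $e_1,\dots,e_d$ at each grid point span $\R^d$; as you note, this follows because a grid set of size exactly $\binom{n}{d}$ forces every point to lie on precisely $d$ hyperplanes with independent normals. Both arguments are sound.
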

\begin{proof} 
  We  prove this by induction on $d$. If $d = 1$, then $C$ is a segment, which has two boundary points. So, assume that $d>1$ and that the lemma holds for $d-1$ in place of~$d$.
  Let $H_1,\dotsc,H_n$ be the $n$ hyperplanes generating the grid set~$P$. Put $C_i\eqdef C\cap H_i$. Write $\partial C_i$ for the relative
  boundary of $C_i$ inside the hyperplane $H_i$.

  Consider an arbitrary point $x\in \partial C\cap P$. Since $x\in \partial C$, we can find a hyperplane $H$ passing through $x$ such that $C$ lies on one side of~$H$. Then, at least $d-1$ hyperplanes among $H_1,\dotsc,H_n$ pass through $x$ but differ from $H$.
  For each such $H_i$, the codimension-$2$ subspace $H_i \cap H$ contains $x$ and  bounds $C_i$ inside $H_i$, implying that $x\in \partial C_i$.
     
  Since this holds for every $x\in \partial C\cap P$, it follows that
    \begin{equation}
        |\partial C \cap P| (d-1)  \le  \sum_{i=1}^n |\partial C_i \cap P|.
    \end{equation}      

    Observe that $C_i\cap P$ is itself a grid set inside the $(d-1)$-dimensional hyperplane $H_i$.
    Therefore, bounding $|\partial C_i \cap P|$ by induction, we obtain
    \[
      |\partial C \cap P| \le \frac{n}{d-1}\cdot 2\binom{n-1}{d-2} = 2 \binom{n}{d-1}.\qedhere
    \]
\end{proof}

Fix a grid set $X$ of size $\binom{n}{d}$. Let $X_1,\dotsc,X_m$ be small perturbations of $X$ chosen so that the  family $X_1, \dots,  X_m$ is in general position.
We shall show that these $m$ sets do not contain large subsets with the same-type property.

For $x\in X_i$, write $P(x)$ for its \emph{predecessor}, the point of $X$ that $x$ is a perturbation of.
Similarly, write $P(Y)$ for the set of predecessors of a set $Y\subset X_i$.
Let $\mathcal{H}$ be the set of $n$ hyperplanes generating the grid set $X$.

Consider any sets $Y_1, \dots, Y_m$ with the same-type property such that $Y_i \subseteq X_i$.
Writing $\operatorname{int}{A}$ for the interior of a set $A$, define, for each $i=1,2,\dotsc,m$,
\[
  Z_i \eqdef X \cap \operatorname{int} \conv{Y_i}.
\]

Breaking the set $P(Y_i)$ into the boundary and the interior parts we obtain
\begin{align*}
  P(Y_i)&=\bigl(P(Y_i)\cap \operatorname{int} \conv{P(Y_i)}\bigr)\cup \bigl(P(Y_i)\cap \partial \conv{P(Y_i)}\bigr)\\
        &\subseteq \bigl(X\cap \operatorname{int} \conv{P(Y_i)}\bigr)\cup \bigl(X \cap \partial \conv{P(Y_i)}\bigr).\\
  \intertext{If the perturbation defining $Y_i$ is sufficiently small, $X\cap \operatorname{int} \conv{P(Y_i)}\subseteq Z_i$, and so}
  P(Y_i)&\subseteq Z_i \cup \bigl(X \cap \partial \conv{P(Y_i)}\bigr).
\end{align*}

Since $\abs{Y_i}=\abs{P(Y_i)}$, \Cref{lemma:boundarypointsbound} tells us that
\begin{equation}\label{eq:yzbound}
\begin{aligned}
  \abs{Y_i}&\leq \abs{Z_i}+\abs{X \cap \partial \conv{P(Y_i)}} \\&\leq \abs{Z_i}+o(\abs{X}) \quad\text{as }n\to\infty.
\end{aligned}
\end{equation}

Since $Z_i\subset \conv{Y_i}$, \Cref{lemma:SameTypeFailing} implies that no hyperplane in $\mathcal{H}$
intersects more than $d$ sets among $Z_1,\dotsc,Z_m$. 
By the pigeonhole principle, for some $i$, some set $Z_i$ intersects at most $dn/m$ hyperplanes of $\mathcal{H}$.
Hence, this $Z_i$ is contained in the grid set generated by $dn/m$ hyperplanes,
implying that
\[
  \abs{Z_i}\le \binom{dn/m}{d}=(d/m)^d\binom{n}{d}\bigl(1+o(1)\bigl)=(d/m)^d\abs{X}\bigl(1+o(1)\bigl)\quad\text{as }n\to\infty.
\]
From this and \eqref{eq:yzbound}, it follows that $c(m,d)\leq (d/m)^d$.

\paragraph{Remark.} The preceding argument also shows that the upper bound of $(d/m)^d$ holds for the non\nobreakdash-partite version
of the same-type lemma. Precisely, if $X$ is a slight perturbation of a grid set of size $\binom{n}{d}$, and
$Y_1,\dots,Y_m\subset X$ are disjoint sets with the same-type property, then at least one of $Y_i$ is of size at most $\binom{dn/m}{d}\bigl(1+o(1)\bigl)$.

\section{Arbitrarily good approximations to \texorpdfstring{$c(m,d)$}{c(m,d)}}
We now turn to the task of computing arbitrarily good approximations to $c(m,d)$. We use the following well-known
result of Vapnik and \v{C}ervonenkis about the existence of $\veps$-approximants.

 \begin{lemma}[Section 1.5 of \cite{vc_original}]
   \label{theorem:vc_approximations}
   Let $\mathcal{F}\subseteq 2^X$ be a set family of VC-dimension~$D$. Then, for any $0<\veps<1$ there exists a set $A\subseteq X$ of size at most
   $\frac{32}{\veps^2}D \ln \frac{16D}{\veps^2}$ such that
   \[
    \left| \frac{|F \cap X|}{|X|} - \frac{|F \cap A|}{|A|}  \right| \le \veps\qquad\text{for all }F\in\mathcal{F}.
   \]
\end{lemma}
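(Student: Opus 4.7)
The plan is to establish the lemma via the classical probabilistic argument of Vapnik and Cervonenkis: one shows that a uniform random sample of $X$ of size $s = \lceil \tfrac{32 D}{\veps^{2}}\ln \tfrac{16 D}{\veps^{2}}\rceil$ is an $\veps$-approximation with positive probability, so some $A$ of the stated size must exist. I would start by drawing $A$ as a uniform random sample of $s$ points of $X$ (with replacement); for any fixed $F\in \mathcal{F}$, the ratio $|F\cap A|/|A|$ is then a sample mean of $s$ i.i.d.\ Bernoulli variables with mean $|F\cap X|/|X|$, so Hoeffding's inequality gives sharp concentration. A direct union bound over $\mathcal{F}$ is infeasible when $\mathcal{F}$ is large, so the essential tool is the symmetrization trick: drawing an independent second sample $A'$ of size $s$, one bounds the ``one-sample'' supremum by twice the ``two-sample'' supremum:
\[
  \Pr\Bigl[\sup_{F\in\mathcal{F}}\Bigl|\tfrac{|F\cap A|}{|A|}-\tfrac{|F\cap X|}{|X|}\Bigr|>\veps\Bigr]\le 2\Pr\Bigl[\sup_{F\in\mathcal{F}}\Bigl|\tfrac{|F\cap A|}{|A|}-\tfrac{|F\cap A'|}{|A'|}\Bigr|>\tfrac{\veps}{2}\Bigr].
\]

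Next, I would condition on the multiset $A\cup A'$ of size $2s$. By the Sauer--Shelah lemma, the number of distinct traces $\{F\cap (A\cup A'):F\in \mathcal{F}\}$ is at most $\Pi_{\mathcal{F}}(2s)\le (2es/D)^{D}$---this is the crucial finiteness input furnished by the VC-dimension hypothesis. Conditionally on $A\cup A'$, the split into $A$ and $A'$ is a uniform random bipartition into halves, so $|F\cap A|$ is hypergeometric with mean $|F\cap (A\cup A')|/2$. Hoeffding's inequality for sampling without replacement yields $\Pr[\bigl|{|F\cap A|}/{|A|}-{|F\cap A'|}/{|A'|}\bigr|>\veps/2]\le 2\exp(-\veps^{2}s/8)$. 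A union bound over the at most $(2es/D)^{D}$ traces then bounds the double-sample probability---and hence, by the previous display, half of the one-sample probability---by $4(2es/D)^{D}\exp(-\veps^{2}s/8)$.

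Finally, I would substitute the value of $s$ stated above and verify that $4(2es/D)^{D}\exp(-\veps^{2}s/8)<1$, which reduces to the elementary inequality $\veps^{2}s/8>D\ln(2es/D)+\ln 4$. This is the only step with nontrivial bookkeeping: it amounts to showing the exponential decay $\exp(-\veps^{2}s/8)$ dominates the Sauer--Shelah polynomial factor, which a short calculation confirms for the stated constants $32$ and $16$ (separating $\ln(2es/D)$ into $\ln(16D/\veps^{2})$ plus lower-order terms gives the required slack). Since the failure probability is then strictly less than $1$, a good $A$ of size $s$ must exist. The main conceptual step---the only part genuinely beyond routine---is the symmetrization/double-sample idea, which reduces control of a supremum over the potentially infinite class $\mathcal{F}$ to a union bound over a Sauer--Shelah-sized finite family; everything else (Hoeffding, Sauer--Shelah, and the arithmetic of constants) is standard.
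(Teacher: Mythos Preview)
The paper does not prove this lemma at all: it is quoted verbatim as a known result from Vapnik and \v{C}ervonenkis, with the citation ``Section~1.5 of \cite{vc_original}'' standing in lieu of a proof. So there is nothing in the paper to compare your argument against.

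That said, your sketch is exactly the classical Vapnik--\v{C}ervonenkis argument and is correct in outline: random sampling, the symmetrization (double-sample) trick to pass from the infinite class $\mathcal{F}$ to the finitely many traces on $A\cup A'$, the Sauer--Shelah bound $(2es/D)^{D}$ on the number of traces, Hoeffding on the random split, and a union bound. Your verification that the constants $32$ and $16$ suffice also checks out (with $L\eqdef\ln(16D/\veps^{2})\ge\ln 16$, the inequality $\ln(16e)+\ln L<3L$ holds). One small technical wrinkle: you sample \emph{with replacement}, which produces a multiset rather than a subset $A\subseteq X$; to match the statement literally you should either sample without replacement (Hoeffding's inequality still applies in the hypergeometric case) or add a line explaining how to pass from the multiset to a genuine subset. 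This is routine and does not affect the substance of the argument.
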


 For the purpose of estimating $c(m,n)$, this allows us to limit the search to bounded-size families.
 \begin{proposition}
    For any natural numbers $m, d$ and any $\varepsilon > 0$ there exist sets $A_1,  \dots, A_m \subseteq \R^d$ of size bounded by a computable function of $m, d, \varepsilon$ such that $\abs{c(A_1, \dots,  A_m) - c( m, d )} \le \varepsilon$.
 \end{proposition}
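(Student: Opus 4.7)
The plan is to start from a configuration $X_1,\dots,X_m$ with $c(X_1,\dots,X_m)\le c(m,d)+\varepsilon/2$ (such a configuration exists by the infimum definition of $c(m,d)$, and we may assume $|X_i|$ is arbitrarily large by \Cref{lemma:pointsBlowing}), and then produce bounded-size subsets $A_i\subseteq X_i$ via \Cref{theorem:vc_approximations}. Since $c(A_1,\dots,A_m)\ge c(m,d)$ always holds, it suffices to prove $c(A_1,\dots,A_m)\le c(X_1,\dots,X_m)+\varepsilon/2$.

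For this bound, I would take an optimal same-type witness $B_1,\dots,B_m\subseteq A_1,\dots,A_m$ with $\min_i|B_i|/|A_i|=c(A_1,\dots,A_m)$. Enlarging each $B_i$ to $A_i\cap\conv B_i$ preserves the same-type property (the convex hulls $\conv B_i$ are unchanged, so \Cref{lemma:SameTypeFromConnected} applied to them still gives same-type for the enlarged witness), so we may assume $|B_i|/|A_i|=|A_i\cap\conv B_i|/|A_i|$. Setting $Y_i\eqdef X_i\cap\conv B_i$, the inclusion $\conv Y_i\subseteq\conv B_i$ together with \Cref{lemma:SameTypeFromConnected} shows that $Y_1,\dots,Y_m$ also has the same-type property. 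Hence $\min_i|Y_i|/|X_i|\le c(X_1,\dots,X_m)$, and everything reduces to the approximation $|X_i\cap\conv B_i|/|X_i|\ge|A_i\cap\conv B_i|/|A_i|-\varepsilon/2$ for each~$i$; i.e., $A_i$ should be an $\varepsilon/2$-approximant of $X_i$ with respect to every polytope of the form $\conv B_i$, $B_i\subseteq A_i$.

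I would secure this approximation by applying \Cref{theorem:vc_approximations} to the family $\mathcal{F}_k$ of subsets of $X_i$ of the form $X_i\cap K$, where $K\subseteq\R^d$ is a polytope with at most $k$ vertices. By the upper bound theorem each such $K$ is an intersection of $O(k^{\lfloor d/2\rfloor})$ halfspaces, so $\mathcal{F}_k$ has VC dimension bounded by a computable function $V(k,d)$, polynomial in $k$. \Cref{theorem:vc_approximations} then yields $A_i$ of size at most $O\bigl(V(k,d)\varepsilon^{-2}\log(V(k,d)/\varepsilon)\bigr)$. Choosing $k=\lceil (c(m,d)+\varepsilon)|A_i|\rceil$ and invoking the polynomial-in-$m$ upper bound $c(m,d)\le d^dm^{-d}$ from \Cref{theorem:maintheorem} makes the implicit system in $(k,|A_i|)$ solvable; moreover, for any hypothetical witness $B$ violating $\min_i|B_i|/|A_i|\le c(X_1,\dots,X_m)+\varepsilon/2$, one may shrink each $B_i$ with $|B_i|>k$ down to size $k$ without destroying the violation (since the threshold $k/|A_i|$ already exceeds $c(X_1,\dots,X_m)+\varepsilon/2$), placing the argument within the scope of~$\mathcal{F}_k$ and producing the required contradiction.

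The main obstacle is precisely this circular interdependence between $k$ and $|A_i|$: the VC approximant bound grows polynomially in $k$, while $k$ must itself be a fraction of $|A_i|$ in order to contain the optimal $|B_i|$. The polynomial-in-$m$ decay $c(m,d)\le d^dm^{-d}$ from \Cref{theorem:maintheorem} is what keeps this fraction small enough that a bounded consistent solution exists for every $d\ge 2$; without it, the fixed-point system would fail to close and the argument would yield no effective bound on~$|A_i|$.
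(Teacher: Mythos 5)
Your overall architecture (pick $X_1,\dots,X_m$ near the infimum, extract bounded-size $A_i$ by a VC $\varepsilon$-approximant, then transfer a same-type witness from the $A_i$ back to the $X_i$) matches the paper, and your reduction to showing $c(A_1,\dots,A_m)\le c(X_1,\dots,X_m)+\varepsilon/2$ is the right target. The transfer step via $Y_i\eqdef X_i\cap\conv B_i$ is also sound, though to justify that the $\conv B_i$ are not all pierced by a hyperplane you need \Cref{lemma:SameTypeFailing} (contrapositive), not just \Cref{lemma:SameTypeFromConnected}. The real problem is the range space you feed into \Cref{theorem:vc_approximations}.

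Your family $\mathcal{F}_k$ (traces of $k$-vertex polytopes) has VC-dimension growing like $d\,k^{\lfloor d/2\rfloor}\log k$, so the guaranteed approximant size is $f(k)\approx \varepsilon^{-2}\,d\,k^{\lfloor d/2\rfloor}\log(\cdot)$. Closing the loop requires some $k$ with $k\ge(c(m,d)+\varepsilon)\,|A_i|$ while $|A_i|\le f(k)$, i.e.\ $k\ge(c(m,d)+\varepsilon)f(k)$. Dividing by $k$, this needs $1\ge (c(m,d)+\varepsilon)\,\varepsilon^{-2}\,d\,k^{\lfloor d/2\rfloor-1}\log(\cdot)$. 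Already for $d=2,3$ the exponent vanishes and the requirement becomes $\log(dk)\lesssim \varepsilon^2/\bigl((c(m,d)+\varepsilon)d\bigr)\le\varepsilon/d<1$, impossible; for $d\ge4$ the left side additionally grows polynomially in $k$. The polynomial upper bound $c(m,d)\le d^d m^{-d}$ does not rescue this: it can only shrink $c(m,d)+\varepsilon$ down to $\varepsilon$, leaving a lower bound of $\varepsilon^{-1}$ on the right side, not $1$. So the fixed-point system you describe genuinely has no solution, and the claim that \Cref{theorem:maintheorem} ``keeps the fraction small enough'' is where the argument breaks.

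The paper sidesteps the circularity by replacing ``convex hull of $B_i$'' (whose combinatorial complexity scales with $|A_i|$) with a convex region of bounded complexity. Given a same-type family $Y_1,\dots,Y_m$ inside the $A_i$, \Cref{lemma:SameTypeFailing} yields, for each $i$, a hyperplane $H_i$ separating $Y_i$ from the union of the other $Y_j$'s; each $Y_i$ then lies in a single cell $P_i$ of the arrangement of $H_1,\dots,H_m$, which is an open polyhedron with at most $m$ facets, and the cells $P_1,\dots,P_m$ still have the same-type property. This lets the paper use the fixed range space of $m$-facet polytopes, of VC-dimension $O(dm\log m)$ independent of $|A_i|$, so the approximant size is a computable function of $m,d,\varepsilon$ alone and no fixed-point equation arises. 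That is the key idea missing from your write-up.
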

 \begin{proof}
   Pick finite sets $X_1, \dots, X_m \subset \R^d$ in general position such that $c(X_1, \dots,  X_m) < c( m, d) + \varepsilon / 2$.  Let $\mathcal{F}$ be the family of open polytopes in $\R^d$ with at most $m$ facets, its VC-dimension is at most $O(d m \log m)$ (see \cite[Lemma 10.3.1 and Proposition 10.3.3]{lecturesDG}).
   Apply \Cref{theorem:vc_approximations} to each $X_i$ and $\mathcal{F}$ with $\varepsilon / 2$ in place of $\veps$ to obtain sets $A_i$ of size bounded by some function of $m,d,\veps$. 

   We claim that $c(m,d)\leq c(A_1, \dotsc, A_m)\leq c(m,d) + \varepsilon$. Since the former inequality follows from the definition of the constant $c(\,\cdots)$, it remains to show the latter one.
   To that end, consider arbitrary subsets $Y_1 \subseteq A_1, \dots, Y_m \subseteq A_m$ with the same-type property. By \Cref{lemma:SameTypeFailing}, for each $i$ the set $Y_i$ is separated from $Y_1 \cup \dots  \cup Y_{i-1} \cup Y_{i+1} \cup \dots \cup Y_m$ by some hyperplane, which we denote by~$H_i$. The hyperplanes $H_1,\dotsc,H_m$ form a hyperplane arrangement;
   each $Y_i$ is contained within a single cell of this arrangement, which we denote by $P_i$. Since $P_i$ is an intersection of $m$ halfspaces, each $P_i$ is an open polyhedron with at most $m$ facets. Observe that the sets $P_1,\dotsc,P_m$  have the same-type property; this implies that $P_1 \cap X_1,\dotsc,P_m\cap X_m$ have the same-type property as well,
   and so, for some $i$, we have
   \[
     \frac{\abs{P_i\cap X_i}}{\abs{X_i}}\leq c(m,d)+\veps/2.
   \]
   Therefore, for this value of $i$, we have
   \[
     \frac{|Y_i|}{|A_i|} \le \frac{|P_i \cap A_i|}{|A_i|} \le \frac{|P_i \cap X_i|}{|X_i|} + \varepsilon / 2 \le  c(m,d) + \varepsilon.
   \]
   Since the sets $Y_1,\dotsc,Y_m$ are arbitrary, this completes the proof.
 \end{proof}

 The approximability of the constants $c(m,d)$ now follows from the famous result of Tarski on the decidability of the theory of real closed fields
 \cite{tarski} (see for example \cite[Theorem 2.77]{basu_pollack_roy} for a modern exposition).
 Indeed, the value of $c(A_1,\dotsc,A_m)$ depends only on the orientations of $(d+1)$-tuples from $A_1\cup\dotsb\cup A_m$.
 The existence of $A_1,\dotsc,A_m$ with specified orientations of $(d+1)$-tuples 
 can be expressed as an existential sentence in the language of ordered fields. 
 This sentence is decidable by the aforementioned result of Tarski (though deciding
 existential sentences can be done more efficiently than general sentences; see, e.g., \cite[Algorithm 13.1]{basu_pollack_roy}).
 So, the largest value of $c(A_1,\dotsc,A_m)$, subject to $\abs{A_i}=B_i$, can be computed by iterating over all possible sign patterns and
 checking if they are realizable by point sets in~$\R^d$.

\bibliographystyle{acm}
\bibliography{main}
\end{document}